\long\def\nodo#1{}
\def\id{\mathrm{id}}
\def\genfd{\mathbf{k}}
\newtheoremstyle{definition}{}{}{\upshape}{}{\bfseries}{.}{0.5em}{}
\theoremstyle{definition}
\newtheorem{theorem}{Theorem}[section]
\newtheorem{lemma}[theorem]{Lemma}
\newtheorem{definition}[theorem]{Definition}
\begin{document}
\comment{Comment on ``Twisted bialgebroids versus bialgebroids from a Drinfeld twist''}

\author{Zoran \v{S}koda$^1$, Martina Stoji\'{c}$^2$}

\address{$^1$Faculty of Teacher’s Education, University of Zadar, F.~Tudjmana 24,
	23000 Zadar, Croatia}
\address{$^2$Department of Mathematics, Faculty of Science, University of Zagreb,
	Bijeni\v{c}ka cesta~30, 10000 Zagreb, Croatia}
\ead{$^1$zskoda@unizd.hr}
\ead{$^2$stojic@math.hr}
\begin{abstract}
  A class of left bialgebroids whose underlying algebra $A\sharp H$ is a smash product of a bialgebra $H$ with a braided commutative Yetter--Drinfeld $H$-algebra $A$ has recently been studied in relation to models of field theories on noncommutative spaces. In [A.~Borowiec, A.~Pacho\l, ``Twisted bialgebroids versus bialgebroids from a Drinfeld twist'', J.\ Phys.\ A50 (2017) 055205] a proof has been presented that the bialgebroid $A_F\sharp H^F$ where $H^F$ and $A_F$ are the twists of $H$ and $A$ by a Drinfeld 2-cocycle $F = \sum F^1\otimes F^2$ is isomorphic to the twist of the bialgebroid $A\sharp H$ by the bialgebroid 2-cocycle $\sum 1\sharp F^1\otimes 1\sharp F^2$ induced by $F$. They assume $H$ is quasitriangular, which is reasonable for many physical applications. However the proof and the entire paper take for granted that the coaction and the prebraiding are both given by special formulas involving the R-matrix. 
There are counterexamples of Yetter--Drinfeld modules over quasitriangular Hopf algebras which are not of this special form. Nevertheless, the main result essentially survives. We present a proof with a general coaction and the correct prebraiding, and even without the assumption of quasitriangularity.
\end{abstract}

% Uncomment for keywords
\vspace{2pc}
\noindent{\it Keywords}: bialgebroid, Drinfeld twist, smash product algebra

% MSC2020 classification:  16T10, 16S40, 16T05
% Uncomment if a separate title page is required
%\maketitle

\section{Introduction}
\label{sec:intro}

Associative bialgebroids and Hopf algebroids appear as algebraic models of noncommutative phase spaces~\cite{borowpachol,lukierskicov,halgoid} and in other roles related to symmetries of noncommutative spaces~\cite{bohmHbk,hanmajid,lu}, inclusions of subfactors~\cite{kadszl} and deformation quantization~\cite{xu}. Drinfeld twists are often a source of new examples of Hopf algebras. Article~\cite{borowpachol} shows how the Xu's variant~\cite{xu} of Drinfeld twists of bialgebroids of the form of a smash product of a bialgebra $H$ and a braided commutative Yetter--Drinfeld $H$-module algebra $A$ may be induced from a Drinfeld twist of $H$, and how so twisted bialgebroid compares to the bialgebroid defined by the smash product of the appropriately twisted $H$ and the twisted $A$. This article is to slightly correct the arguments in~\cite{borowpachol} and complete this comparison. 

For a bialgebra $H$ over a field $\genfd$, a left-right Yetter--Drinfeld (YD) module $M$ is a $\genfd$-vector space with a left $H$-action $h\otimes m\mapsto h\triangleright m$ and a right $H$-coaction $\rho\colon m\mapsto \sum m_{[0]}\otimes m_{[1]}$ satisfying the YD compatibility condition~\cite{radfordtowber}
\begin{equation}\label{eq:YD}
  (h_{(1)}\triangleright m_{[0]})\otimes h_{(2)}m_{[1]} = (h_{(2)}\triangleright m)_{[0]}\otimes (h_{(2)}\triangleright m)_{[1]}h_{(1)}.
\end{equation}
Here and below we often omit the summation sign when using Sweedler notation~\cite{Majid}. 
Morphisms of YD modules are $\genfd$-linear maps which are both morphisms of $H$-modules and of $H$-comodules. The category of left-right Yetter--Drinfeld $H$-modules ${}_{H}\mathcal{YD}^{H}$ has two standard monoidal structures~\cite{radfordtowber}, and article~\cite{borowpachol} chooses one in which the vector space $M\otimes N$ has the $H$-action $h\triangleright (m\otimes n) = \sum (h_{(1)}\triangleright m) \otimes (h_{(2)}\triangleright n)$ and the $H$-coaction
\begin{equation}\label{eq:rhotensor}
  m\otimes n\mapsto \textstyle\sum (m_{[0]}\otimes n_{[0]})\otimes n_{[1]} m_{[1]}.
\end{equation}
This tensor product has a prebraiding with the components
\begin{equation}\label{eq:sigma}
  \sigma_{MN}\colon M\otimes N\to N\otimes M,
  \,\,\,\,\,m\otimes n\mapsto \sum n_{[0]}\otimes (n_{[1]}\triangleright m).
\end{equation}
A monoid $A$ in ${}_{H}\mathcal{YD}^{H}$ is a YD module with a product $\mu\colon a\otimes b\mapsto a\cdot b$ which makes it an $H$-module algebra and an $H^{op}$-comodule algebra; these monoids are called Yetter--Drinfeld $H$-module algebras. A monoid $A$ is braided commutative if $\mu\circ\sigma_{AA}= \mu$, which in ${}_{H}\mathcal{YD}^{H}$ reads elementwise $\sum b_{[0]}\cdot(b_{[1]}\triangleright a) = a\cdot b$. As a minor lapsus, it is wrongly stated in~\cite{borowpachol} that the monoid condition in ${}_H\mathcal{YD}^H$ implies braided commutativity.

If $H$ is quasitriangular with a universal $R$-element $R = \sum R_{1}\otimes R_{2}\in H\otimes H$, there is a particular source of examples~\cite{CohenWestreich}, namely, for any $H$-module $A$, there is a right $H$-coaction
\begin{equation}\label{eq:Rcoact}
  a\mapsto \sum (R_{2}\triangleright a)\otimes R_{1}
\end{equation}
that makes it into a YD $H$-module and, whenever $A$ is actually a left $H$-module algebra which is braided commutative as a monoid in ${}_H\mathcal{M}$, this $H$-coaction makes it into a braided commutative YD $H$-module algebra (\cite{BrzMilitaru}, Example 4.2). The component  $\sigma_{AA}\colon A\otimes A\to A\otimes A$ of the prebraiding is then given by $m\otimes n\mapsto \sum (R_{2} \triangleright n)\otimes (R_{1}\triangleright m)$.

While in~\cite{borowpachol} quasitriangularity is required in the statements, their proofs additionally use formula~(\ref{eq:Rcoact}) for the $H$-coaction of {\em any} braided commutative YD $H$-module algebra. This is unsatisfactory and misleading as implied by the following simple counterexample provided to us by P.~Saracco and J.~Vercruysse~\cite{sarvercr}.

{\bf Counterexample.} Let $G$ be a finite group and $H = \genfd G$ a group algebra viewed as a (triangular, $R=1\otimes 1$) Hopf $\genfd$-algebra via the coaction $\Delta(h) = h\otimes h$ for $h\in H$. Let $A=\genfd G$ with the $H$-action by conjugation, $h\triangleright g = h g h^{-1}$ and the $H$-coaction $g\mapsto g\otimes g^{-1}$ ($g,h\in G$). Then the YD condition follows by the calculation
$(h\triangleright g)\otimes h g^{-1} = h g h^{-1} \otimes h g^{-1}
= h g h^{-1} \otimes hg^{-1}h^{-1}h =(h\triangleright g) \otimes (h\triangleright g)^{-1}h$. The braided commutativity is directly checked as $g(g^{-1}\triangleright h) = gg^{-1}hg = hg$.

J.~Vercruysse~\cite{vercr} noted that, generalizing this example, any Hopf algebra $H$ with a bijective antipode $S$, considered as a left $H$-module algebra via $h\triangleright g = h_{(1)}g S(h_{(2)})$ is a braided commutative YD $H$-module algebra via the right coaction $g\mapsto g_{(2)}\otimes S^{-1}(g_{(1)})$. Indeed, the proof for the YD condition above readily generalizes as
$$\begin{array}{lcl}
(h_{(1)}\triangleright g_{(2)})\otimes h_{(2)}S^{-1}(g_{(1)})
&=& h_{(3)}g_{(2)}S(h_{(4)})\otimes h_{(5)}S^{-1}(g_{(1)})S^{-1}(h_{(2)})h_{(1)}\\
&=& h_{(3)}g_{(2)}S(h_{(4)})\otimes S^{-1}(h_{(2)} g_{(1)} S (h_{(5)})) h_{(1)}\\
&=& (h_{(2)}g S(h_{(3)}))_{(2)}\otimes S^{-1}((h_{(2)}g S(h_{(3)}))_{(1)}) h_{(1)}\\
&=& (h_{(2)}\triangleright g)_{[0]}\otimes (h_{(2)}\triangleright g)_{[1]}h_{(1)}.
\end{array}$$
The braided commutativity is again easy:
$$\begin{array}{lcl}
  g_{(2)}(S^{-1}(g_{(1)})\triangleright h)&=& g_{(2)} S^{-1}(g_{(1)})_{(1)} h S(S^{-1}(g_{(1)})_{(2)}) \\
  &=& g_{(3)} S^{-1}(g_{(2)}) h S(S^{-1}(g_{(1)})) = h g.
 \end{array}$$

As a consequence, the proof of the Borowiec--Pacho\l\ main result~(\cite{borowpachol}, Theorem 3.1) has to be redone for a general $H$-coaction on $A$, as exhibited below. 

\section{Twisting entire category of
  Yetter--Drinfeld modules for general bialgebra}

To prove the main result of~\cite{borowpachol} in the case of a general braided commutative YD module algebra $A$, we first study the precise form of twisting for YD $H$-modules and braided commutative YD $H$-module algebras. We use basic notions of monoidal categories and functors~\cite{aguiar,Majid,schauen}.

The triangular counterexample with $H = \genfd G$ in Section~\ref{sec:intro} shows that the quasitriangularity of the bialgebra $H$ does not imply any simplification of the form of the $H$-coaction on $A$ used for the calculations,
so we shall not assume quasitriangularity.

\begin{definition} \cite{Drinfeld,Majid}
A Drinfeld twist for a bialgebra $H$ is an invertible element $F = \sum F^1\otimes F^2\in H\otimes H$ such that $F$ and its inverse $F^{-1}=\sum \bar{F}^1\otimes\bar{F}^2$ satisfy any of the four mutually equivalent 2-cocycle conditions
\begin{equation}\label{eq:cocF}
  F^1\otimes F'^1 F^2_{(1)}\otimes F'^2 F^2_{(2)}
  = F'^1 F^1_{(1)}\otimes F'^2 F^1_{(2)}\otimes F^2,
\end{equation}
\begin{equation}\label{eq:cocbarF}
  \bar{F}^1\otimes\bar{F}^2_{(1)} \bar{F}'^1\otimes\bar{F}^2_{(2)}\bar{F}'^2
  =  \bar{F}^1_{(1)}\bar{F}'^1\otimes\bar{F}^1_{(2)}\bar{F}'^2\otimes\bar{F}^2,
\end{equation}
\begin{equation}\label{eq:cocFmix1}
  \bar{F}^1\otimes\bar{F}^2F^1\otimes F^2
  =F^1_{(1)}\bar{F}^1\otimes F^1_{(2)}\bar{F}^2_{(1)}\otimes F^2\bar{F}^2_{(2)},
\end{equation}
\begin{equation}\label{eq:cocFmix2}
  F^1\otimes\bar{F}^1 F^2\otimes\bar{F}^2
  =F^1\bar{F}^1_{(1)}\otimes F^2_{(1)}\bar{F}^1_{(2)}\otimes F^2_{(2)}\bar{F}^2,
\end{equation}
and such that $F$ is counital, that is, $(\epsilon\otimes\id)(F) = (\id\otimes\epsilon)(F) = 1$. 

Note that the inverse $F^{-1}$ automatically satisfies counitality as well.
\end{definition}

For a Drinfeld 2-cocycle $F$, and $H$ a bialgebra, the $F$-twisted bialgebra $H^F$ is $H$ as an algebra, with the coproduct $h\mapsto F\Delta(h)F^{-1}$~(\cite{Majid}), also denoted $h\mapsto \sum h_{(1)F}\otimes h_{(2)F}$. Regarding that $H^F = H$ as algebras, if $M$ is any $H$-module, it can be viewed as an $H^F$-module $M_F$ with the same action; however, due to the new coproduct $\Delta^F$, the monoidal product $\otimes^F$ on the category ${}_{H^F}\mathcal{M}$ of $H^F$-modules is different from the tensor product $\otimes$ of the underlying $H$-modules (that is, in ${}_H\mathcal{M}$), $M_F\otimes^F N_F$ is $M\otimes N$ as a vector space with the $H^F$-action $h\triangleright_{H^F}(m\otimes n) = (F^1 h_{(1)}\bar{F}^1\triangleright m)\otimes(F^2 h_{(2)}\bar{F}^2\triangleright n)$.
\begin{lemma}\label{lem:zetaMN}
(\cite{Drinfeld}) The functor $A\mapsto A_F$ together with the isomorphisms
\begin{equation}\label{eq:zeta}
\zeta_{M,N}\colon M_F\otimes^F N_F\to (M\otimes N)_F,\,\,\,\,\,m\otimes n\mapsto\sum (\bar{F}^1\triangleright m)\otimes(\bar{F}^2\triangleright n)
\end{equation}
forms an equivalence of the monoidal categories ${}_{H}\mathcal{M}$ and ${}_{H^F}\mathcal{M}$.
\end{lemma}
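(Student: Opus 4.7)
The plan is to verify three things that make the pair $((-)_F, \zeta)$ into a strong monoidal equivalence: that the underlying functor $(-)_F$ is an equivalence of ordinary categories, that each $\zeta_{M,N}$ is an $H^F$-module isomorphism natural in both arguments, and that the family $\zeta$ satisfies the hexagon and unit coherence axioms of a monoidal functor. Since $H^F = H$ as algebras, the functor $(-)_F$ is literally the identity on underlying vector spaces and morphisms, so it is tautologically an equivalence of the underlying categories.

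I would first exhibit the inverse of $\zeta_{M,N}$ as $m \otimes n \mapsto (F^1 \triangleright m) \otimes (F^2 \triangleright n)$, which follows from $F^{-1}F = FF^{-1} = 1_{H \otimes H}$. To verify $H^F$-equivariance, I would apply $\zeta_{M,N}$ to the twisted action $h \triangleright_{H^F}(m \otimes n) = (F^1 h_{(1)} \bar{F}^1 \triangleright m) \otimes (F^2 h_{(2)} \bar{F}^2 \triangleright n)$, using an independent copy $F' = \bar{F}'^1 \otimes \bar{F}'^2$ of $F^{-1}$ for the outer $\zeta$, and then collapse the $\bar{F}' F$ pair via $F^{-1}F = 1$ in $H \otimes H$ to recover $h \triangleright_H \zeta_{M,N}(m \otimes n)$. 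Naturality of $\zeta$ in $M$ and $N$ is then immediate from $H$-linearity of morphisms in each tensor factor.

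The main calculation will be the hexagon identity
$$\zeta_{M, N \otimes P} \circ (\id_M \otimes \zeta_{N, P}) = \zeta_{M \otimes N, P} \circ (\zeta_{M, N} \otimes \id_P).$$
Evaluating both sides on $m \otimes n \otimes p$ yields, respectively,
$$(\bar{F}'^1 \triangleright m) \otimes (\bar{F}'^2_{(1)} \bar{F}^1 \triangleright n) \otimes (\bar{F}'^2_{(2)} \bar{F}^2 \triangleright p)$$
and
$$(\bar{F}'^1_{(1)} \bar{F}^1 \triangleright m) \otimes (\bar{F}'^1_{(2)} \bar{F}^2 \triangleright n) \otimes (\bar{F}'^2 \triangleright p),$$
and these agree precisely by the 2-cocycle condition~(\ref{eq:cocbarF}), after renaming $F \leftrightarrow F'$. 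Unit coherence will reduce to the counitality of $F^{-1}$, which forces $\zeta_{\genfd, M}$ and $\zeta_{M, \genfd}$ to coincide with the identity under the canonical unit isomorphisms. The main obstacle here is bookkeeping---keeping straight which tensor product ($\otimes$ or $\otimes^F$) each arrow lives in and using independent symbols for distinct instances of $F^{-1}$; the substantive content of the lemma is concentrated in the single cocycle identity~(\ref{eq:cocbarF}).
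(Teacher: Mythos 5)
The paper states this lemma without proof, attributing it to Drinfeld, so there is no in-paper argument to compare against; your proposal supplies the standard proof and it is correct. All the steps check out: the inverse of $\zeta_{M,N}$ is as you say, $H^F$-equivariance collapses via $F^{-1}F=1\otimes 1$ exactly as described, the two sides of the associativity coherence evaluate to the displayed expressions and agree by the cocycle condition~(\ref{eq:cocbarF}) (your two displays are precisely its two sides with the roles of $F$ and $F'$ interchanged, which is harmless since both denote copies of the same element), and unit coherence is the counitality of $F^{-1}$, which the paper notes follows automatically from that of $F$. The only blemish is notational: you introduce ``an independent copy $F'=\bar{F}'^1\otimes\bar{F}'^2$ of $F^{-1}$'', where the primed barred symbols denote a copy of $F^{-1}$, not of $F$; this does not affect the argument.
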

Strong monoidal functors (hence equivalences of monoidal categories in particular)
send (co)monoids to (co)monoids (\cite{aguiar}, 3.4 and~\cite{schauen}) and braided strong monoidal functors preserve bimonoids and (braided) commutative monoids in symmetric (and even prebraided) monoidal categories. We use this freely within this section. 
In particular, if $A$ is a monoid in ${}_{H}\mathcal{M}$ (that is, an $H$-module algebra), then $A_F$ is an $H^F$-module algebra with the multiplication $a\cdot_F b = (\bar{F}_1\triangleright a)\cdot(\bar{F}_2\triangleright b)$. The monoidal center construction is not functorial but it is under monoidal equivalences. Thus, it follows that the center $\mathcal{Z}({}_H\mathcal{M})$ of ${}_{H}\mathcal{M}$ and the center $\mathcal{Z}({}_{H^F}\mathcal{M})$ of ${}_{H^F}\mathcal{M}$ are braided monoidally equivalent and braided commutative monoids in these centers are also in 1-1 correspondence. If $H$ is in fact a {\it finite-dimensional Hopf algebra}, then the category ${}_H\mathcal{YD}^H$ is monoidally equivalent to the center $\mathcal{Z}({}_H\mathcal{M})$, hence there is a braided monoidal equivalence between ${}_H\mathcal{YD}^H$ and ${}_{H^F}\mathcal{YD}^{H^F}$ and, in particular, the categories of braided commutative monoids in them are equivalent, hence an induced correspondence of braided commutative monoids. Chen and Zhang~\cite{chenzhang2007} computed the formulas for the equivalence between ${}_H\mathcal{YD}^H$ and ${}_{H^F}\mathcal{YD}^{H^F}$ for any finite-dimensional Hopf algebra $H$. As a technical result useful to derive a generalization of Borowiec--Pacho\l\ main result on twisting bialgebroids, we now prove that Chen--Zhang formula for twisted coaction (see~(\ref{eq:rhoF}) below) provides a YD structure on $M_F$ for $M$ a YD module over a {\em general bialgebra} $H$, leading to an equivalence of prebraided monoidal categories.
 \begin{theorem}\label{thm:CZgen}
Suppose $H$ is any bialgebra, $F$ a Drinfeld twist for $H$ and $(M,\triangleright,\rho)$ a left-right YD $H$-module. Then the following holds.

 (i) The $\genfd$-linear map $\rho^F\colon M_F\to M_F\otimes H^F$ given by
\begin{equation}\label{eq:rhoF}
  \rho^F(m) = \sum m_{[0]F}\otimes m_{[1]F} := \sum F^1\triangleright (\bar{F}^2\triangleright m)_{[0]}
  \otimes F^2 (\bar{F}^2\triangleright m)_{[1]}\bar{F}^1
 \end{equation}
 is a right $H^F$-coaction and $(M_F,\triangleright,\rho^F)$ is a Yetter--Drinfeld $H^F$-module.

   (ii) If $N$ is also a YD $H$-module, the isomorphisms $\zeta_{M,N}$ are
 (iso)morphisms of $H^F$-comodules, hence also of YD $H^F$-modules.

   (iii) The functor $(M,\triangleright,\rho)\to(M_F,\triangleright,\rho^F)$ (identity on morphisms) together with the isomorphisms $\zeta_{M,N}$ of YD $H^F$-modules forms a monoidal equivalence ${}_H\mathcal{YD}^H\to{}_{H^F}\mathcal{YD}^{H^F}$ lifting the monoidal equivalence ${}_H\mathcal{M}\to{}_{H^F}\mathcal{M}$.

 (iv) If $A$ is a YD $H$-module algebra, then $A_F$ is a YD $H^F$-module algebra.

 (v) If in (iv) $A$ is braided commutative, then $A_F$
 is as well.

 (vi) The equivalence ${}_H\mathcal{YD}^H\to{}_{H^F}\mathcal{YD}^{H^F}$ in (iii) is prebraided monoidal.
 \end{theorem}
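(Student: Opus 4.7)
The plan is to verify the six claims in sequence. Parts (i), (ii) and (vi) are genuinely computational and demand Sweedler-diagrammatic work using the cocycle identities (\ref{eq:cocF})--(\ref{eq:cocFmix2}); once they are in hand, parts (iii)--(v) assemble into place from the general principles about strong (pre)braided monoidal functors quoted in the paragraph after Lemma \ref{lem:zetaMN}, combined with the module-level equivalence already established there.

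For (i), I first check counitality of $\rho^F$: applying $\id_M\otimes\epsilon$ to (\ref{eq:rhoF}) and using $\epsilon$-multiplicativity together with counitality of both $F$ and $F^{-1}$ collapses everything to $(\id_M\otimes\epsilon)\rho(m)=m$. Coassociativity with respect to $\Delta^F=F\Delta F^{-1}$ is obtained by expanding the two composites $(\rho^F\otimes\id)\rho^F$ and $(\id\otimes\Delta^F)\rho^F$; after lining up the outer $F$- and $\bar F$-factors via (\ref{eq:cocF}) and (\ref{eq:cocbarF}) the middle reduces to coassociativity of $\rho$. For the YD compatibility of $(M_F,\triangleright,\rho^F)$ over $H^F$, substitute (\ref{eq:rhoF}) into the $H^F$-analog of (\ref{eq:YD}), unfold the $\Delta^F$-Sweedler indices of $h$, and use (\ref{eq:cocFmix1}) and (\ref{eq:cocFmix2}) to transport the $F$- and $\bar F$-dressings past the action and coaction; a single application of (\ref{eq:YD}) for the original $(M,\triangleright,\rho)$ finishes the match. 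For (ii), the $H^F$-coaction on $M_F\otimes^F N_F$ is the $H^F$-analog of (\ref{eq:rhotensor}) built from $\rho^F$, while the coaction on $(M\otimes N)_F$ is (\ref{eq:rhoF}) applied to $M\otimes N$ with the original tensor YD structure; combining (\ref{eq:zeta}), (\ref{eq:rhoF}), (\ref{eq:rhotensor}) and the cocycle identity (\ref{eq:cocF}) to shuffle the $F$-factors shows that $\zeta_{M,N}$ intertwines both coactions.

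Part (iii) is then immediate: the functor $M\mapsto M_F$ with the natural isomorphisms $\zeta_{M,N}$, already a monoidal equivalence at the $H$-module level by Lemma \ref{lem:zetaMN}, lifts to the YD level because of (i) and (ii). Part (iv) follows because strong monoidal equivalences send monoids to monoids. For (vi), I check directly that
\begin{equation*}
\zeta_{N,M}\circ\sigma^F_{M_F,N_F}=\sigma_{M,N}\circ\zeta_{M,N},
\end{equation*}
where $\sigma^F$ is the prebraiding (\ref{eq:sigma}) in ${}_{H^F}\mathcal{YD}^{H^F}$ computed from $\triangleright$ and $\rho^F$; expanding both sides via (\ref{eq:sigma}), (\ref{eq:rhoF}) and (\ref{eq:zeta}), the $F$- and $\bar F$-factors cancel after one more use of the cocycle identities and the equality reduces to (\ref{eq:sigma}) for $(M,N)$. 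Part (v) follows because braided commutativity is the identity $\mu\circ\sigma_{AA}=\mu$, which is preserved by any prebraided strong monoidal equivalence. The principal obstacle is the YD compatibility in (i): the $H^F$-coproduct distributes $F$-factors across all Sweedler indices, so one must simultaneously manage the $F$ and $\bar F$ dressings on the action, on the coaction, and on $\Delta^F$ itself, and only the mixed cocycle identities (\ref{eq:cocFmix1}) and (\ref{eq:cocFmix2}) can move $F$-factors past $\bar F$-factors in the necessary way; the prebraiding check in (vi) is similar in flavor but noticeably shorter.
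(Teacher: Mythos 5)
Your overall architecture matches the paper's: direct Sweedler computations for (i), (ii), (vi), and general facts about strong (pre)braided monoidal functors for (iii)--(v). Deriving (v) from (vi) plus preservation of braided commutative monoids is a legitimate, slightly cleaner alternative to the paper, which instead verifies $b_{[0]F}\cdot_F(b_{[1]F}\triangleright a)=a\cdot_F b$ by a short direct cancellation. However, there is one concrete misstep in your plan for the hard computations. You claim that coassociativity of $\rho^F$ ``reduces to coassociativity of $\rho$'' once the $F$- and $\bar F$-factors are lined up via (\ref{eq:cocF}) and (\ref{eq:cocbarF}), and similarly that the comodule-morphism property of $\zeta_{M,N}$ in (ii) follows from shuffling $F$-factors with (\ref{eq:cocF}). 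This is not so: in both computations the Yetter--Drinfeld compatibility (\ref{eq:YD}) of the \emph{original} structure is an indispensable input (the paper invokes it twice in each chain). The reason is structural: iterating $\rho^F$ places the action by $F^1$ (coming from the $[0]$-leg of (\ref{eq:rhoF})) \emph{inside} the outer coaction, as in $(\bar F'^2F^1\triangleright(\cdots))_{[0]}\otimes(\bar F'^2F^1\triangleright(\cdots))_{[1]}$, and the only way to extract that action from under $\rho$ is to trade it for multiplications on the $[1]$-leg via (\ref{eq:YD}); no combination of the cocycle identities alone can do this, and for a mere $H$-comodule the map $\rho^F$ would in general fail to be a $\Delta^F$-coaction. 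If you execute your plan literally, this is where it stalls.

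Conversely, you identify the twisted YD compatibility in (i) as the principal obstacle requiring the mixed identities (\ref{eq:cocFmix1}) and (\ref{eq:cocFmix2}); in fact that step is the easy one. Since $\triangleright$ is unchanged and $h_{(1)F}\otimes h_{(2)F}=F^1h_{(1)}\bar F^1\otimes F^2h_{(2)}\bar F^2$, the inner $\bar F$-factors of $\Delta^F(h)$ meet the outer $F'$-factors of $\rho^F$ head-on and cancel via $F^{-1}F=FF^{-1}=1\otimes1$; a single application of (\ref{eq:YD}) and re-insertion of cancelling pairs then yields the twisted YD identity, with no cocycle condition needed at all. The remaining items are fine: counitality of $\rho^F$, the lifting argument for (iii) along the faithful strict monoidal forgetful functors, (iv) from preservation of monoids, and the prebraiding check (vi), which as you say is short and rests on the cancellation $\mathcal{F}^{-1}\mathcal{F}=1\otimes 1$ applied to (\ref{eq:bmoneq}).
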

\begin{proof}
  (i) We first show that $\rho^F$ is an $H^F$-coaction, $(\rho^F\otimes\id)\rho^F = (\id\otimes\Delta^F)\rho^F$.
$$\begin{array}{l}
   (\rho^F\otimes\id)\rho^F(m) \stackrel{(\ref{eq:rhoF})}=
   (\rho^F\otimes\id)((F^1\triangleright (\bar{F}^2\triangleright m)_{[0]})
  \otimes F^2 (\bar{F}^2\triangleright m)_{[1]}\bar{F}^1)
\\ =
F'^1\triangleright (\bar{F}'^2\triangleright(F^1\triangleright(\bar{F}^2\triangleright m)_{[0]}))_{[0]}\otimes F'^2(\bar{F}'^2\triangleright(F^1\triangleright(\bar{F}^2\triangleright m)_{[0]}))_{[1]}\bar{F}'^1\otimes F^2(\bar{F}^2\triangleright m)_{[1]}\bar{F}^1
\\ = F'^1\triangleright (\bar{F}'^2 F^1\triangleright(\bar{F}^2\triangleright m)_{[0]})_{[0]}\otimes F'^2(\bar{F}'^2 F^1\triangleright (\bar{F}^2\triangleright m)_{[0]})_{[1]}\bar{F}'^1\otimes F^2(\bar{F}^2\triangleright m)_{[1]}\bar{F}^1
  \\
\stackrel{(\ref{eq:cocFmix1})}=
F'^1\triangleright (F^1_{(2)}\bar{F}'^2_{(1)}\triangleright(\bar{F}^2\triangleright m)_{[0]})_{[0]}\otimes F'^2(F^1_{(2)}\bar{F}'^2_{(1)}\triangleright (\bar{F}^2\triangleright m)_{[0]})_{[1]}F^1_{(1)}\bar{F}'^1\otimes F^2\bar{F}'^2_{(2)}(\bar{F}^2\triangleright m)_{[1]}\bar{F}^1
\\ \stackrel{(\ref{eq:YD})}=
F'^1 F^1_{(1)}\triangleright(\bar{F}'^2_{(1)}\triangleright(\bar{F}^2\triangleright m)_{[0]})_{[0]}\otimes F'^2 F^1_{(2)}(\bar{F}'^2_{(1)}\triangleright(\bar{F}^2\triangleright m)_{[0]})_{[1]}\bar{F}'^1\otimes F^2 \bar{F}'^2_{(2)}(\bar{F}^2\triangleright m)_{[1]}\bar{F}^1
\\ \stackrel{(\ref{eq:cocF})}=
F^1\triangleright(\bar{F}'^2_{(1)}\triangleright(\bar{F}^2\triangleright m)_{[0]})_{[0]}\otimes F'^1 F^2_{(1)}(\bar{F}'^2_{(1)}\triangleright(\bar{F}^2\triangleright m)_{[0]})_{[1]}\bar{F}'^1\otimes F'^2 F^2_{(2)}\bar{F}'^2_{(2)}(\bar{F}^2\triangleright m)_{[1]}\bar{F}^1
\\ \stackrel{(\ref{eq:YD})}=
   F^1\triangleright(\bar{F}'^2_{(2)}\bar{F}^2\triangleright m)_{[0]}\otimes F'^1 F^2_{(1)}(\bar{F}'^2_{(2)}\bar{F}^2\triangleright m)_{[1]}\bar{F}'^1\otimes F'^2 F^2_{(2)}(\bar{F}'^2_{(2)}\bar{F}^2\triangleright m)_{[2]}\bar{F}'^2_{(1)} \bar{F}^1
    \\ \stackrel{(\ref{eq:cocbarF})}=
  F^1\triangleright(\bar{F}^2\triangleright m)_{[0]}\otimes F'^1 F^2_{(1)}(\bar{F}^2\triangleright m)_{[1]}\bar{F}^1_{(1)}\bar{F}'^1\otimes F'^2 F^2_{(2)}(\bar{F}^2\triangleright m)_{[2]}\bar{F}^1_{(2)}\bar{F}'^2
    \\ = F^1\triangleright (\bar{F}^2\triangleright m)_{[0]}\otimes F'\Delta(F^2 (\bar{F}^2\triangleright m)_{[1]}\bar{F}^1)F'^{-1}
\\ = (\id\otimes\Delta^F)\rho^F(m).
  \end{array}$$
The YD condition for the $H$-coaction $\rho\colon m\mapsto \sum m_{[0]}\otimes m_{[1]}$ implies the YD condition for the new $H^F$-coaction $\rho^F\colon m\mapsto \sum m_{[0]F}\otimes m_{[1]F}$ by calculation
$$\begin{array}{l}
  (h_{(1)F}\triangleright m_{[0]F})\otimes (h_{(2)F}  m_{[1]F}) =
  \\
  = ((F^1h_{(1)}\bar{F}^1)\triangleright (F'^1\triangleright (\bar{F}'^2\triangleright m)_{[0]}))\otimes F^2 h_{(2)}\bar{F}^2 F'^2(\bar{F}'^2\triangleright m)_{[1]}\bar{F}'^1\\
  = F^1\triangleright
  (h_{(1)}\triangleright (\bar{F}'^2\triangleright m)_{[0]})\otimes
  F^2 (h_{(2)}(\bar{F}'^2\triangleright m)_{[1]})\bar{F}'^1
  \\
  \stackrel{(\ref{eq:YD})}= F^1\triangleright(h_{(2)}\triangleright (\bar{F}'^2\triangleright m))_{[0]}\otimes F^2 (h_{(2)}\triangleright (\bar{F}'^2\triangleright m))_{[1]}h_{(1)}\bar{F}'^1
  \\
= F^1\triangleright(\bar{F}^2 \triangleright(F'^2 h_{(2)}\bar{F}'^2\triangleright m))_{[0]}\otimes F^2(\bar{F}^2\triangleright (F'^2 h_{(2)}\bar{F}'^2\triangleright m))_{[1]}\bar{F}^1 F'^1 h_{(1)}\bar{F}'^1
\\
=(h_{(2)F}\triangleright m)_{[0]F}\otimes (h_{(2)F}\triangleright m)_{[1]F}h_{(1)F}.
\end{array}$$
(ii) We first calculate
$$\begin{array}{l}
\rho_{M_F\otimes^F N_F}(m\otimes n) \stackrel{(\ref{eq:rhotensor})}{=} m_{[0]F} \otimes n_{[0]F} \otimes n_{[1]F}m_{[1]F}
\\
\stackrel{(\ref{eq:rhoF})}{=} F^1\triangleright(\bar{F}'^2\triangleright m)_{[0]}
\otimes F'^1\triangleright(\bar{F}^2\triangleright n)_{[0]}
\otimes F'^2(\bar{F}^2\triangleright n)_{[1]}\bar{F}^1
F^2(\bar{F}'^2\triangleright m)_{[1]}\bar{F}'^1 
\\
\stackrel{(\ref{eq:cocFmix2})}= 
F^1\bar{F}^1_{(1)}\triangleright(\bar{F}'^2\triangleright m)_{[0]}
\otimes F'^1 \triangleright(F^2_{(2)}\bar{F}^2\triangleright n)_{[0]}
\otimes F'^2 (F^2_{(2)}\bar{F}^2\triangleright n)_{[1]}
F^2_{(1)}\bar{F}^1_{(2)}(\bar{F}'^2\triangleright m)_{[1]}\bar{F}'^1 
\\ \stackrel{(\ref{eq:YD})}= 
F^1\bar{F}^1_{(1)}\triangleright(\bar{F}'^2\triangleright m)_{[0]}
\otimes F'^1 F^2_{(1)}\triangleright(\bar{F}^2\triangleright n)_{[0]}
\otimes F'^2 F^2_{(2)}(\bar{F}^2\triangleright n)_{[1]}
\bar{F}^1_{(2)}(\bar{F}'^2\triangleright m)_{[1]}\bar{F}'^1 
\\ \stackrel{(\ref{eq:YD})}=
F^1\triangleright(\bar{F}^1_{(2)}\bar{F}'^2\triangleright m)_{[0]}
\otimes F'^1 F^2_{(1)}\triangleright(\bar{F}^2\triangleright n)_{[0]}
\otimes F'^2 F^2_{(2)}(\bar{F}^2\triangleright n)_{[1]}
(\bar{F}^1_{(2)}\bar{F}'^2\triangleright m)_{[1]}\bar{F}^1_{(1)}\bar{F}'^1 
\\ \stackrel{(\ref{eq:cocbarF})}=
  F^1\triangleright(\bar{F}^2_{(1)}\bar{F}'^1\triangleright m)_{[0]}\otimes
  F'^1 F^2_{(1)}\triangleright(\bar{F}^2_{(2)}\bar{F}'^2\triangleright n)_{[0]}
\otimes F'^2 F^2_{(2)}(\bar{F}^2_{(2)}\bar{F}'^2\triangleright n)_{[1]}
(\bar{F}^2_{(1)}\bar{F}'^1\triangleright m)_{[1]}\bar{F}^1.
\end{array}$$
We observe next that, by the definitions,
for all $m\in M$, $n\in N$,
$$\begin{array}{l}
\rho_{(M\otimes N)_F} (m\otimes n)\stackrel{(\ref{eq:rhoF})}=
F^1\triangleright (\bar{F}^2\triangleright (m\otimes n))_{[0]}\otimes
F^2(\bar{F}^2\triangleright (m\otimes n))_{[1]}\bar{F}^1
\\
\stackrel{(\ref{eq:rhotensor})}=
F^1_{(1)}\triangleright(\bar{F}^2_{(1)}\triangleright m)_{[0]}\otimes
F^1_{(2)}\triangleright(\bar{F}^2_{(2)}\triangleright n)_{[0]}\otimes
F^2(\bar{F}^2_{(2)}\triangleright n)_{[1]}
  (\bar{F}^2_{(1)}\triangleright m)_{[1]}\bar{F}^1,
\end{array}$$
where all action symbols $\triangleright$ are in ${}_H\mathcal{M}$.
It follows that
$$\begin{array}{l}
  ((\zeta_{M,N}^{-1}\otimes\id_H)\circ\rho_{(M\otimes N)_F}\circ\zeta_{M,N})(m\otimes n))
  \\
  = (F'^1\otimes F'^2\otimes 1)\, (\triangleright \otimes \triangleright \otimes \cdot) \, \rho_{(M\otimes N)_F}((\bar{F}'^1\triangleright m)\otimes(\bar{F}'^2\triangleright n))
  \\ =
 F'^1 F^1_{(1)}\triangleright(\bar{F}^2_{(1)}\bar{F}'^1\triangleright m)_{[0]}\otimes
 F'^2 F^1_{(2)}\triangleright(\bar{F}^2_{(2)}\bar{F}'^2\triangleright n)_{[0]}\otimes
  F^2(\bar{F}^2_{(2)}\bar{F}'^2\triangleright n)_{[1]}
  (\bar{F}^2_{(1)}\bar{F}'^1\triangleright m)_{[1]}\bar{F}^1
  \\ \stackrel{(\ref{eq:cocF})}=
F^1\triangleright(\bar{F}^2_{(1)}\bar{F}'^1\triangleright m)_{[0]}\otimes
 F'^1 F^2_{(1)}\triangleright(\bar{F}^2_{(2)}\bar{F}'^2\triangleright n)_{[0]}\otimes
  F'^2F^2_{(2)}(\bar{F}^2_{(2)}\bar{F}'^2\triangleright n)_{[1]}
  (\bar{F}^2_{(1)}\bar{F}'^1\triangleright m)_{[1]}\bar{F}^1.
\end{array}$$
Comparing the results, we obtain
$\rho_{M_F\otimes^F N_F} = (\zeta_{M,N}\otimes\id_H)^{-1}\circ\rho_{(M\otimes N)_F}\circ\zeta_{M,N}$, hence $\zeta_{M,N}$ is indeed a morphism of $H^F$-comodules.

(iii) It is sufficient to observe that the maps forming the functor $(M,\triangleright,\rho)\mapsto (M_F,\triangleright,\rho^F)$ together with the maps
$\zeta_{M,N}\colon M_F\otimes^F N_F\to(M\otimes N)_F$ from~(\ref{eq:zeta}) viewed
by (ii) as morphisms in $\mathcal{M}^{H^F}$ lift the data 
forming the monoidal equivalence ${}_{H}\mathcal{M}^{H}\to{}_{H^F}\mathcal{M}^{H^F}$ from Lemma~\ref{lem:zetaMN} to ${}_{H}\mathcal{YD}^{H}\to{}_{H^F}\mathcal{YD}^{H^F}$ along the forgetful functors
$U\colon{}_H\mathcal{YD}^{H}\to{}_H\mathcal{M}$ and $U^F\colon{}_{H^F}\mathcal{YD}^{H^F}\to{}_{H^F}\mathcal{M}$. Functors $U$ and $U^F$ are faithful strict monoidal, hence all the defining (algebraic) properties to form a monoidal equivalence ${}_{H}\mathcal{YD}^{H}\to{}_{H^F}\mathcal{YD}^{H^F}$ are automatic. 

(iv) A consequence of the monoidal equivalence is that 
for any algebra $A$ in ${}_H\mathcal{YD}^H$,
$A_F$ is automatically an algebra in ${}_{H^F}\mathcal{YD}^{H^F}$.

(v) To show that a braided commutative algebra $A$
is sent to a braided commutative algebra $A_F$, we need to check that if 
$b_{[0]}\cdot(b_{[1]}\triangleright a) = a\cdot b$ for all $a,b\in A$,
we have
$$
b_{[0]F}\cdot_F(b_{[1]F}\triangleright_F a) = a\cdot_F b,\,\,\forall a,b\in A_F.
$$
Now, $\triangleright_F = \triangleright$ and by using~(\ref{eq:rhoF}) for $m = b$,
$$
(F^1\triangleright (\bar{F}^2\triangleright b)_{[0]})\cdot_F ((F^2(\bar{F}^2\triangleright b)_{[1]}\bar{F}^1)\triangleright a) = a\cdot_F b,
$$
which, after rewriting $\cdot_F$ in terms of $\cdot$ and $F$, and after elementary cancellations, gives
$$
(\bar{F}^2\triangleright b)_{[0]}\cdot ((\bar{F}^2\triangleright b)_{[1]}\triangleright (\bar{F}^1\triangleright a)) =  (\bar{F}^1\triangleright a)\cdot (\bar{F}^2\triangleright b),
$$
which indeed holds by the braided commutativity of $A$.

(vi) We need to show 
$\zeta_{N,M}\circ\sigma^F_{M_F,N_F}= \sigma_{M,N}\circ\zeta_{M,N}\colon M_F\otimes^F N_F\to (N\otimes M)_F$
(see~(\ref{eq:sigma}),(\ref{eq:zeta})), where $\sigma^F$ is the prebraiding in ${}_{H^F}\mathcal{YD}^{H^F}$ and $\sigma_{M,N}$ is a component of the prebraiding in ${}_H\mathcal{YD}^H$ understood as a morphism in ${}_{H^F}\mathcal{YD}^{H^F}$. 
This boils down to
\begin{equation}\label{eq:bmoneq}(\bar{F}^1\triangleright n_{[0]F})\otimes (\bar{F}^2 \triangleright (n_{[1]F}\triangleright m))
  = (\bar{F}^2\triangleright n)_{[0]}\otimes((\bar{F}^2\triangleright n)_{[1]}\triangleright(\bar{F}^1\triangleright m)).
\end{equation}
By using formula~(\ref{eq:rhoF}) for $\rho^F(n)$, the left-hand side becomes
$(\bar{F}^1 F^1\triangleright (\bar{F}'^2\triangleright n)_{[0]})\otimes((\bar{F}^2 F^2(\bar{F}'^2\triangleright n)_{[1]}\bar{F}'^1)\triangleright m)$.
After cancellation $\mathcal{F}^{-1}\mathcal{F}= 1\otimes_{A_F} 1$, we easily obtain the equality in~(\ref{eq:bmoneq}).
\end{proof}

\section{Isomorphism of twisted bialgebroids}

Given a bialgebra $H = (H,\mu_H,\eta,\Delta,\epsilon)$ and a left $H$-module algebra $(A,\triangleright)$, the smash product algebra $A\sharp H$ is the tensor product $A\otimes H$ as a vector space, equipped with the associative multiplication bilinearly extending formulas $(a\otimes h)(b\otimes k) = a (h_{(1)}\triangleright b)\otimes h_{(2)} k$, where $a\sharp h$ is the notation for $a\otimes h\in A\otimes H$ in the context of this algebra structure. It is useful to notice the embeddings $s\colon A\to A\sharp H, a\mapsto a\sharp 1$ and $i_H\colon H\to A\sharp H$, $h\mapsto 1\sharp h$, usually viewed as identifications. Viewing $\mu_H$ as an $H$-action on $H$, the monoidal structure on ${}_H\mathcal{M}$ induces an $H$-module structure on $A\otimes H$ by $h\triangleright (a\otimes k) = (h_{(1)}\triangleright a)\otimes h_{(2)}k$. For a twist $F$, observe the $H^F$-module isomorphism $\zeta_{A,H}$ arising as a component of the natural transformation $\zeta$~(\ref{eq:zeta}),
\begin{equation}
  \zeta_{A,H}=(\bar{F}^1\triangleright\otimes\bar{F}^2\cdot)\colon A_F\otimes^F H^F \to (A\otimes H)_F.
\end{equation}
If we equip $A_F\otimes^FH^F$ with the smash product structure $A_F\sharp H^F$ (for the bialgebra $H^F$) and $A\otimes H$ with the smash product structure $A\sharp H$ for the bialgebra $H$, this vector space isomorphism is an algebra isomorphism. This folklore algebra isomorphism has been {\it ad hoc} postulated in~\cite{borowpachol}, Proposition 3.1. Denoting $a\otimes h\in A_F\sharp H^F$ as $a\sharp^F h$, we check that $\zeta_{A,H}$ is an algebra homomorphism,
$$\begin{array}{lcl}
  \zeta_{A,H}(a\sharp^F h)\zeta_{A,H}(b\sharp^F k) &=&
  (\bar{F}^1\triangleright a)\sharp\bar{F}^2 h \cdot (\bar{F}'^1\triangleright b)\sharp\bar{F}'^2 k\\
  &=& (\bar{F}^1\triangleright a)(\bar{F}^2_{(1)}h_{(1)}\bar{F}'^1\triangleright b)
  \sharp\bar{F}^2_{(2)}h_{(2)}\bar{F}'^2 k\\
  &=& (\bar{F}^1\triangleright a)(\bar{F}^2_{(1)}\bar{F}'^1 h_{(1)F}\triangleright b)
  \sharp\bar{F}^2_{(2)}\bar{F}'^2 h_{(2)F} k\\
  &\stackrel{(\ref{eq:cocbarF})}=& (\bar{F}^1_{(1)}\bar{F}'^1\triangleright a)(\bar{F}^1_{(2)}\bar{F}'^2 h_{(1)F}\triangleright b)
  \sharp\bar{F}^2 h_{(2)F} k\\
  &=& \bar{F}^1\triangleright((\bar{F}'^1\triangleright a)\cdot_A (\bar{F}'^2\triangleright (h_{(1)F}\triangleright b)))\sharp \bar{F}^2 h_{(2)F} k\\
  &=& \bar{F}^1\triangleright(a\cdot_{A_F}(h_{(1)F}\triangleright b))\sharp \bar{F}^2 h_{(2)F} k \\
  &=& \zeta_{A,H}((a\cdot_{A_F}(h_{(1)F}\triangleright b))\sharp^F h_{(2)F} k)\\
  &=& \zeta_{A,H}((a\sharp^F h)(b\sharp^F k)).
\end{array}$$
The compatibility with the unit element is direct by the counitality of $F^{-1}$, namely $\zeta_{A,H}(1_A\sharp^F 1_H) = (\bar{F}^1\triangleright 1_A)\otimes\bar{F}^2 = \epsilon(\bar{F}^1)1_A\otimes\bar{F}^2 = 1_A\sharp 1_H$.

For an algebra $A$ (``base algebra''), a {\it left $A$-bialgebroid} is given by data $(\mathcal{H},s,t,\Delta^{\mathcal{H}},\epsilon^{\mathcal{H}})$, where $\mathcal{H}$ is an algebra (``total algebra''), $s\colon A\to\mathcal{H}$ (``source map''), $t\colon A^{\mathrm{op}}\to\mathcal{H}$ (``target map'') are algebra maps with commuting images so that $\mathcal{H}$ is an $A$-bimodule via $a.h.a' = s(a)t(a')h$, $\Delta^{\mathcal{H}}\colon \mathcal{H}\to\mathcal{H}\otimes_A\mathcal{H}$ is a coassociative comultiplication in the category of $A$-bimodules with a counit $\epsilon^{\mathcal{H}}\colon \mathcal{H}\to A$ and several standard axioms are required~\cite{lu,BrzMilitaru,bohmHbk,stojicscext}. A useful datum is a map $\blacktriangleright\colon \mathcal{H}\otimes A\to A$ defined by $h\otimes a\mapsto h\blacktriangleright a = \epsilon^{\mathcal{H}}(h\cdot s(a))$. A {\it bialgebroid 2-cocycle}  is an element $\mathcal{G}\in\mathcal{H}\otimes_A\mathcal{H}$ satisfying bialgebroid versions of the 2-cocycle and counitality conditions for $F^{-1}$~\cite{xu,twlinPois,twosha}. Given a bialgebroid 2-cocycle $\mathcal{G}=\sum\mathcal{G}^1\otimes\mathcal{G}^2$, one defines the twisted base algebra $A^{\mathcal{G}}$ with the same underlying vector space $A$ and the associative multiplication $a\ast b = (\mathcal{G}^1\blacktriangleright a)\cdot_A(\mathcal{G}^2\blacktriangleright b)$, twisted source 
 $s^{\mathcal{G}}(a) = s(\mathcal{G}^1\blacktriangleright a)\mathcal{G}^2$ and target $t^{\mathcal{G}}(a) = t(\mathcal{G}^2\blacktriangleright a)\mathcal{G}^1$.
The maps $s^{\mathcal{G}}\colon A^{\mathcal{G}}\to\mathcal{H}$ and $t^{\mathcal{G}}\colon (A^{\mathcal{G}})^{\mathrm{op}}\to\mathcal{H}$ define a new $A^{\mathcal{G}}$-bimodule structure $\mathcal{H}^{\mathcal{G}}$ on $\mathcal{H}$ and the tensor product bimodule $\mathcal{H}^{\mathcal{G}}\otimes_{A^{\mathcal{G}}}\mathcal{H}^{\mathcal{G}}$. When the confusion does not arise, we write $\mathcal{H}$ for $\mathcal{H}^{\mathcal{G}}$. An {\it invertible} or {\it Drinfeld--Xu bialgebroid 2-cocycle} (a twistor in the terminology of Xu~\cite{xu}) is a bialgebroid 2-cocycle $\mathcal{G}$ having an inverse $\mathcal{G}^{-1}\in\mathcal{H}^{\mathcal{G}}\otimes_{A^{\mathcal{G}}}\mathcal{H}^{\mathcal{G}}$ in the sense $\mathcal{G}^{-1}\mathcal{G}=1\otimes_{A^{\mathcal{G}}} 1$ and $\mathcal{G}\mathcal{G}^{-1}=1\otimes_A 1$.  Given a Drinfeld--Xu 2-cocycle $\mathcal{G}$, a twisted bialgebroid $\mathcal{H}^{\mathcal{G}}=(\mathcal{H}^{\mathcal{G}},s^{\mathcal{G}},t^{\mathcal{G}},\Delta^{\mathcal{H}^{\mathcal{G}}},\epsilon^{\mathcal{H}^{\mathcal{G}}})$ is defined by $\Delta^{\mathcal{H}^{\mathcal{G}}}(x) = \mathcal{G}^{-1\sharp}\Delta^{\mathcal{H}}(x)\mathcal{G}$ and $\epsilon^{\mathcal{H}^{\mathcal{G}}} = \epsilon^{\mathcal{H}}$, where $ \mathcal{G}^{-1\sharp}$ is the multiplication with $\mathcal{G}^{-1}$ from the left as a well defined map $\mathcal{H}\otimes_A\mathcal{H}\to\mathcal{H}^{\mathcal{G}}\otimes_{A^{\mathcal{G}}}\mathcal{H}^{\mathcal{G}}$~\cite{xu}. In~\cite{borowpachol}, the inverse $\mathcal{F}=\mathcal{G}^{-1}$, which is automatically a 2-cocycle for $\mathcal{H}^{\mathcal{G}}$, is considered as the basic 2-cocycle, but we start from Xu's convention because $\mathcal{F}\in\mathcal{H}^{\mathcal{G}}\otimes_{A^{\mathcal{G}}}\mathcal{H}^{\mathcal{G}}$ is well defined in the tensor product which itself needs data of $\mathcal{F}$ (or, equivalently, $\mathcal{G}$) to be properly defined.

If an $H$-module algebra $A$ carries an $H$-coaction so that it becomes a left-right braided commutative YD $H$-module algebra, the smash product algebra $\mathcal{H} = A\sharp H$ has a structure of a left associative $A$-bialgebroid~\cite{BrzMilitaru,bohmHbk,twosha}; it is a Hopf algebroid~\cite{bohmHbk,stojicscext} if $H$ is a Hopf algebra with a bijective antipode. It is an analogue of a transformation or action groupoid, and it is a variant of a construction from~\cite{lu} where, instead of YD module algebras, module algebras over a Drinfeld double were used. Bialgebroid $A\sharp H$ is sometimes called the scalar extension bialgebroid~\cite{bohmHbk} because $\Delta^{\mathcal{H}}$ extends $\Delta \colon  h\mapsto h_{(1)}\otimes h_{(2)}$ along the embedding of $H$ as $H\cong\genfd\otimes H\subset A\sharp H$, namely $\Delta^{\mathcal{H}}(a\sharp h) = a\sharp h_{(1)}\otimes_A 1\sharp h_{(2)}$ with $\epsilon^{\mathcal{H}}\colon  a\sharp h\mapsto a\epsilon(h)$. The $A$-bimodule structure is given by $s\colon a\mapsto a\sharp 1$, $t\colon a\mapsto a_{[0]}\sharp a_{[1]}$; therefore $(a\sharp h)\blacktriangleright b = \epsilon^{\mathcal{H}}(a (h_{(1)}\triangleright b)\sharp h_{(2)}) = a (h\triangleright b)$. 

Drinfeld 2-cocycle $F$ of a bialgebra $H$ induces a Drinfeld--Xu bialgebroid 2-cocycle $\mathcal{G}=(1\sharp \bar{F}^1)\otimes_{A}(1\sharp\bar{F}^2)\in\mathcal{H}\otimes_A\mathcal{H}$ or $\mathcal{F}=\mathcal{G}^{-1}=(1\sharp^F F^1)\otimes_{A_F}(1\sharp^F F^2)$. Notice that the multiplications $\ast$ on $A^{\mathcal{G}}$ and $\cdot_{A_F}$ on $A_F$ coincide. We now show a proper generalization of Borowiec and Pacho\l~\cite{borowpachol}, Theorem 3.1, allowing for general $H$-coaction on $A$ and also beyond quasitriangular case.
\begin{theorem}\label{thm:twbialgd}
For any bialgebra $H$, a Drinfeld twist $F\in H\otimes H$ and a braided commutative YD $H$-module algebra $A$, the component $\zeta_{A,H}\colon A_F\otimes H^F\to(A\otimes H)^{\mathcal{G}}$ of the natural isomorphism $\zeta$ of functors ${}_H\mathcal{YD}^H\to{}_{H^F}\mathcal{YD}^{H^F}$ considered as an algebra isomorphism $\zeta_{A,H}\colon A_F\sharp H^F\to(A\sharp H)^{\mathcal{G}}$ of smash products is an isomorphism of the scalar extension $A_F$-bialgebroid $(A_F\sharp H^F,s^F,t^F,\Delta^{F},\epsilon^{F})$ and the $\mathcal{G}$-twist $((A\sharp H)^{\mathcal{G}},s^{\mathcal{G}},t^{\mathcal{G}},\Delta^{\mathcal{H}^{\mathcal{G}}},\epsilon^{\mathcal{H}^{\mathcal{G}}})$ of the scalar extension bialgebroid $(A\sharp H,s,t,\Delta^{\mathcal{H}},\epsilon^{\mathcal{H}})$.
\end{theorem}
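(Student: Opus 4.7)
The plan is to leverage the algebra isomorphism $\zeta_{A,H}\colon A_F\sharp H^F\to (A\sharp H)^{\mathcal{G}}$ already verified in the paragraphs preceding the theorem. Since $A_F$ and $A^{\mathcal{G}}$ coincide as algebras (their multiplications $\cdot_F$ and $\ast$ are given by the same formula $(\bar{F}^1\triangleright a)\cdot(\bar{F}^2\triangleright b)$), the base-algebra identification is automatic, and it remains to verify that $\zeta_{A,H}$ intertwines each of the four remaining pieces of bialgebroid structure: source, target, counit, and coproduct.

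For the source, direct evaluation yields $\zeta_{A,H}(s^F(a)) = \zeta_{A,H}(a\sharp^F 1) = (\bar{F}^1\triangleright a)\sharp\bar{F}^2$, which, using the formula $(1\sharp k)\blacktriangleright a = k\triangleright a$ for the scalar extension, is exactly $s(\mathcal{G}^1\blacktriangleright a)\mathcal{G}^2 = s^{\mathcal{G}}(a)$. For the target, I plug the Chen--Zhang formula~(\ref{eq:rhoF}) into $t^F(a) = a_{[0]F}\sharp^F a_{[1]F}$, apply $\zeta_{A,H}$, and cancel the outer $F\otimes F$ against the $\bar{F}\otimes\bar{F}$ of $\zeta_{A,H}$ via $\bar{F}'^1 F^1\otimes\bar{F}'^2 F^2 = 1\otimes 1$; what remains is precisely $(\bar{F}^2\triangleright a)_{[0]}\sharp(\bar{F}^2\triangleright a)_{[1]}\bar{F}^1 = t(\bar{F}^2\triangleright a)(1\sharp\bar{F}^1) = t^{\mathcal{G}}(a)$. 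The counit compatibility $\epsilon^F(a\sharp^F h) = a\epsilon(h) = \epsilon^{\mathcal{H}}((\bar{F}^1\triangleright a)\sharp\bar{F}^2 h)$ is immediate from counitality of $F^{-1}$.

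The main step is the coproduct. I would expand
$\Delta^{\mathcal{H}^{\mathcal{G}}}\zeta_{A,H}(a\sharp^F h) = \mathcal{G}^{-1\sharp}\Delta^{\mathcal{H}}((\bar{F}^1\triangleright a)\sharp\bar{F}^2 h)\mathcal{G}$
by using the scalar extension formula $\Delta^{\mathcal{H}}(b\sharp k) = (b\sharp k_{(1)})\otimes_A(1\sharp k_{(2)})$ and multiplying with the representatives $(1\sharp\bar{F}^1)\otimes(1\sharp\bar{F}^2)$ of $\mathcal{G}$ on the right and $(1\sharp F^1)\otimes(1\sharp F^2)$ of $\mathcal{F}=\mathcal{G}^{-1}$ on the left. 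On the other side, $(\zeta_{A,H}\otimes\zeta_{A,H})\Delta^F(a\sharp^F h) = (\zeta_{A,H}\otimes\zeta_{A,H})\bigl(a\sharp^F F^1 h_{(1)}\bar{F}^1\otimes_{A_F} 1\sharp^F F^2 h_{(2)}\bar{F}^2\bigr)$ simplifies in its right tensorand using $\bar{F}^1\triangleright 1 = \epsilon(\bar{F}^1)1$ and counitality of $F^{-1}$. Matching the two resulting four-fold $F^{\pm 1}$-expressions reduces to a single application of the mixed cocycle identity~(\ref{eq:cocFmix1}), which rearranges $F^1_{(1)}\bar{F}^1\otimes F^1_{(2)}\bar{F}^2_{(1)}\otimes F^2\bar{F}^2_{(2)}$ into $\bar{F}^1\otimes\bar{F}^2 F^1\otimes F^2$ in the appropriate slots.

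The main obstacle is bookkeeping: four independent copies of $F$ and $F^{-1}$ appear simultaneously in the coproduct comparison, and one must keep track of which tensor product is taken over which base ($A$ before twisting, $A^{\mathcal{G}} = A_F$ after), so as to ensure that the rearrangements carried out factor-by-factor in $A\sharp H$ respect the relevant $A$-balancing and land in the correct relative tensor product. Once the indices are distinguished clearly, the remaining algebra is a short chain of smash product multiplications and a single invocation of~(\ref{eq:cocFmix1}).
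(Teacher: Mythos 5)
Your proposal is correct and follows essentially the same route as the paper: it reuses the previously established algebra isomorphism $\zeta_{A,H}$, verifies the source and counit directly, handles the target by substituting the twisted coaction~(\ref{eq:rhoF}) and cancelling $F^{-1}F=1\otimes 1$, and reduces the coproduct comparison to the single mixed cocycle identity~(\ref{eq:cocFmix1}) applied to exactly the rearrangement you name. The only cosmetic difference is that the paper computes one-directionally from $(\zeta_{A,H}\otimes\zeta_{A,H})\circ\Delta^F$ to $\Delta^{\mathcal{H}^{\mathcal{G}}}\circ\zeta_{A,H}$ rather than expanding both sides and matching in the middle.
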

\begin{proof}
This means that $\zeta_{A,H}$ commutes with the bialgebroid structure maps appropriately~\cite{bohmHbk}, which is checked as follows:
$$\begin{array}{lcl}
  (\zeta_{A,H}\otimes\zeta_{A,H})(\Delta^F(a\sharp^F h)) 
  &=& (\zeta_{A,H}\otimes\zeta_{A,H})(a\sharp^F h_{(1)F}\otimes 1\sharp^F h_{(2)F})
  \\&=&
  (\bar{F}^1\triangleright a)\sharp\bar{F}^2h_{(1)F}\otimes 1\sharp h_{(2)F}
  \\ &=&
(\bar{F}^1\triangleright a)\sharp\bar{F}^2 F^1h_{(1)}\bar{F}'^1\otimes 1
  \sharp F^2 h_{(2)}\bar{F}'^2
  \\ &\stackrel{(\ref{eq:cocFmix1})}=&
(F^1_{(1)}\bar{F}^1\triangleright a)\sharp F^1_{(2)}\bar{F}^2_{(1)}h_{(1)}\bar{F}'^1
  \otimes 1\sharp F^2\bar{F}^2_{(2)}h_{(2)}\bar{F}'^2
  \\ &=& (1\sharp F^1\otimes 1\sharp F^2)((\bar{F}^1\triangleright a)\sharp\bar{F}^2_{(1)}h_{(1)}\otimes 1\sharp\bar{F}^2_{(2)}h_{(2)})
  (1\sharp\bar{F}'^1\otimes1\sharp\bar{F}'^2)
  \\ &=&
\Delta^{\mathcal{H}^{\mathcal{G}}}((\bar{F}^1\triangleright a)\sharp\bar{F}^2 h)
\\ &=& (\Delta^{\mathcal{H}^{\mathcal{G}}}\circ\zeta_{A,H})(a\sharp^F h),
\end{array}$$
 $$
\zeta_{A,H}(s^F(a)) = \zeta_{A,H}(a\sharp^F 1) = (\bar{F}^1\triangleright a)\sharp\bar{F}^2 =
s(\mathcal{G}^1\blacktriangleright a)\mathcal{G}^2 = s^{\mathcal{G}}(a).
$$
Finally, the check for the target map
is essentially more general than in~\cite{borowpachol}:
$$\begin{array}{lcl}
  \zeta_{A,H}(t^F(a)) &\stackrel{(\ref{eq:rhoF})}=& \bar{F}'^1 F^1\triangleright (\bar{F}^2\triangleright a)_{[0]} \sharp \bar{F}'^2 F^2 (\bar{F}^2\triangleright a)_{[1]}\bar{F}^1
  \\& =&
(\bar{F}^2\triangleright a)_{[0]}\sharp (\bar{F}^2\triangleright a)_{[1]}\bar{F}^1  \\ &= &
  t(\bar{F}^2\triangleright a)(1\sharp\bar{F}^1)
  \\ &=& t^{\mathcal{G}}(a).
\end{array}$$
\end{proof}

\section{Concluding remarks}

The $A$-bialgebroid structure on the smash product algebras $A\sharp H$ depends on the coaction of the Yetter--Drinfeld $H$-module $A$. The main purpose of this article was to correct the consequences of the statement in~\cite{borowpachol}
that for a quasitriangular bialgebra $H$ this coaction of $A$ must be of the special form, hence the target map of $A\sharp H$ (and the right $H$-module structure on $A\sharp H$) would be of special form as well. This is shown false by the counterexample in Section~\ref{sec:intro}, but that such Yetter--Drinfeld module algebras form just a subcategory was known to experts before (see~\cite{chenzhang2007}).

We have not exhibited fully fledged physical examples which are not within the Borowiec--Pacho\l\ framework in~\cite{borowpachol}. New examples of bialgebroids are usually quite involved, so this task is left for the future, but it is very likely that many such examples are in place having in mind that the basic counterexample is very simple and that smash products $A\sharp H$ are ubiquitious in noncommutative geometry (including nonquasitriangular case). Our past works~\cite{halgoid,twlinPois,lukierskicov} and Borowiec--Pacho\l\ works are mainly focused on smash products $A\sharp H$ which may be interpreted as noncommutative phase spaces, possibly with included additional symmetries (covariant phase spaces). There are other physical sources of scalar extension bialgebroids. For example, Semikhatov studied (truncations of) Heisenberg doubles $H^*\sharp H$ of some finite-dimensional quantum groups in connection to Kazhdan--Lusztig duality between logarithmic conformal field theories and quantum groups~\cite{semikhatov}. Also, Yetter--Drinfeld module algebras appear in topological field theories. Thus, we think that a solid general treatment of twists of bialgebroids of the type $A\sharp H$ beyond Borowiec--Pacho\l\ case is physically sound.

Our approach leading to Theorem~\ref{thm:twbialgd} is different from~\cite{borowpachol} in the sense that we study twisting of monoidal categories ${}_H\mathcal{M}^H$ and ${}_H\mathcal{YD}^H$ as the main tool, including proving Theorem~\ref{thm:CZgen} (of independent interest), and exhibiting the isomorphism $\zeta_{A,H}$ (denoted $\phi$ in~\cite{borowpachol}) as a particular component of the natural equivalence $\zeta$ of monoidal functors ${}_H\mathcal{YD}^H\longrightarrow{}_{H^F}\mathcal{YD}^{H^F}$. Nevertheless, a more elementary and fully conceptual understanding why this natural equivalence preserves precisely the scalar extension bialgebroid structure is missing.

\section*{Acknowledgements}

A.~Borowiec kindly shared his ideas with the mathematical physics group in Zagreb when visiting us in the early phase of his project leading to~\cite{borowpachol}; the first author (Z.\,\v{S}.) thanks also for his later kind correspondence. However, while Z.\,\v{S}.\ did suggest to avoid assuming quasitriangularity and using in deriving the main result the twist of the entire category of YD modules similarly to the logic shown above, Z.\,\v{S}.\ failed to realize that the actual proof in~\cite{borowpachol} does not cover general scalar extension bialgebroids even for quasitriangular bialgebras and noticed this only as late as early 2022. We thank P.~Saracco and J.~Vercruysse for kind communication of counterexamples~\cite{sarvercr,vercr} and T.~Brzezi\'nski for initiating that communication.
%} %endfootnotesize

\section*{References}

\bibliographystyle{amsalpha}

\end{document}